\title{Some facts about discriminants}
\author{Vladimir Petrov Kostov\\ Universit\'e C\^ote d'Azur, CNRS, LJAD, France
\\  
e-mail: kostov@math.unice.fr} 
\date{}
\newtheorem{tm}{Theorem}
\newtheorem{rem}[tm]{Remark}
\newtheorem{rems}[tm]{Remarks}
\newtheorem{lm}[tm]{Lemma}
\newtheorem{prop}[tm]{Proposition}
\newtheorem{nota}[tm]{Notation}
\newtheorem{st}[tm]{Statement}
\begin{document} 
\maketitle 
\begin{abstract}
For the family of polynomials in one variable 
$P:=x^n+a_1x^{n-1}+\cdots +a_n$ we ask the questions 
at which points its discriminant set can be 
considered as the graph of a function of all coefficients $a_j$ 
but one and how its subset of 
points, where the discriminant set is not smooth, 
projects on the different coordinate 
hyperplanes in the space of the coefficients~$a$.\\ 

{\bf AMS classification:} 12E05; 12D05\\

{\bf Key words:} polynomial in one variable; discriminant set; 
resultant; multiple root
\end{abstract}

\section{Introduction}

Consider the family of polynomials $P:=x^n+a_1x^{n-1}+\cdots +a_n$ 
in the variable $x\in \mathbb{C}$. Set $a:=(a_1,\ldots ,a_n)$ 
and $a^k:=(a_1,\ldots ,a_{k-1},a_{k+1},\ldots ,a_n)$. Denote by 
$D:=\{ a\in \mathbb{C}^n|$Res$(P,P',x)=0\}$ 
its {\em discriminant set}, i.~e. the set of values of $a$ for which $P$ has a 
multiple root. In the present text we treat the question at which 
points the set $D$ can be 
considered as the graph of a function in the variables $a^j$ 
(and for which $j$) and how the set of its 
points, where it is not smooth, projects on the different coordinate 
hyperplanes in the space of the variables~$a$. The results of this paper 
are applicable to the case $x$, $a_j\in \mathbb{R}$, when it is 
important to know the number of positive and negative real roots and the 
signs of the coefficients of $P$, see \cite{AF}, \cite{FKS} and the references 
therein. Some recent results about real 
discriminant sets can be found in \cite{Ko}.
 
The following result is known; we include its proof for the 
sake of completeness:

\begin{lm}\label{lm1}
The set $D$ is the zero set of an irreducible polynomial 
$R$ in $a\in \mathbb{C}^n$. When considered as a polynomial in 
$a_k$, $k\leq n-1$, it is 
of degree $n$, with leading coefficient $\pm k^k(n-k)^{n-k}a_n^{n-k-1}$. The 
polynomial $R$ is degree $n-1$ in $a_n$. One has $R|_{a_{n-1}=a_n=0}\equiv 0$.
\end{lm}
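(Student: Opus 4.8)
The plan is to establish the four assertions separately, working throughout with $R:=\mathrm{Res}(P,P',x)$, which equals $\mathrm{Disc}(P)$ up to a universal sign.

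\emph{Irreducibility.} I would exhibit $D$ as the image of the polynomial map $\Phi\colon\mathbb{C}\times\mathbb{C}^{n-2}\to\mathbb{C}^n$ sending $(c;b_1,\dots,b_{n-2})$ to the coefficient vector of $(x-c)^2(x^{n-2}+b_1x^{n-3}+\cdots+b_{n-2})$. A monic degree-$n$ polynomial has a multiple root precisely when it is of this form, so $\mathrm{Im}\,\Phi=D$; since $\Phi$ is a morphism out of the irreducible variety $\mathbb{C}^{n-1}$, the set $D$ is irreducible, and since $\Phi$ is generically one-to-one (a polynomial with a single double root determines $c$ and the cofactor uniquely) one gets $\dim D=n-1$. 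Thus $D$ is an irreducible hypersurface; moreover at a generic point of $D$ the polynomial $P$ has exactly one double root and the tangency is transversal, so $\mathrm{Disc}(P)$ vanishes there to first order, hence $R$ is reduced and is the irreducible defining equation of $D$.

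\emph{The degree in $a_n$ and the identity $R|_{a_{n-1}=a_n=0}\equiv 0$.} For the latter, $P|_{a_{n-1}=a_n=0}=x^2(x^{n-2}+a_1x^{n-3}+\cdots+a_{n-2})$ always has a double root at $0$, so $R$ vanishes identically on $\{a_{n-1}=a_n=0\}$. For the former, use $R=\pm n^{n}\prod_{i=1}^{n-1}P(y_i)$, where $y_1,\dots,y_{n-1}$ are the roots of $P'$ (this is the standard expression $\mathrm{Res}(P,P')=\mathrm{lc}(P')^{\deg P}\prod_{P'(y)=0}P(y)$). Since $P'$ does not contain $a_n$, the $y_i$ are independent of $a_n$, each $P(y_i)$ is affine in $a_n$ with $a_n$-coefficient $1$, and $\prod_i P(y_i)$ — being symmetric in the $y_i$, hence a genuine polynomial in $a_1,\dots,a_n$ — therefore has degree exactly $n-1$ in $a_n$, with leading coefficient $\prod_i 1=1\neq 0$.

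\emph{The degree in $a_k$ and its leading coefficient.} The bound $\deg_{a_k}R\le n$ I would read off the $(2n-1)\times(2n-1)$ Sylvester matrix of $P,P'$: the variable $a_k$ sits in the $i$-th $P$-row only in column $i+k$ and in the $i$-th $P'$-row only in column $i+k$, so in the Leibniz expansion the $i$-th $P$-row and the $i$-th $P'$-row can never both contribute a factor $a_k$; hence at most $(n-1)+1=n$ factors $a_k$ occur. For the exact degree and the leading term I would send $a_k\to\infty$ with $a_1,\dots,a_{n-1}$ generic and $a_n\neq 0$. With $a_k$ as the large parameter the Newton polygon of $P$ has exactly two lower edges, producing $k$ roots $x_i\sim a_k^{1/k}\omega_i$ with $\omega_i^k=-1$ and $n-k$ roots $x_i\sim a_k^{-1/(n-k)}(-a_n)^{1/(n-k)}\rho_i$ with $\rho_i^{\,n-k}=1$; in both families the roots are simple and the influence of $a_1,\dots,a_{k-1},a_{k+1},\dots,a_{n-1}$ is confined to strictly lower-order Puiseux corrections. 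Substituting into $R=\pm\prod_{i<j}(x_i-x_j)^2$ and grouping the pairs as large–large, small–small, and large–small gives, to leading order, the three factors $\mathrm{Disc}(T^k+1)\,a_k^{k-1}=\pm k^k a_k^{k-1}$, $\ \mathrm{Disc}(S^{\,n-k}-1)(-a_n)^{n-k-1}a_k^{-(n-k-1)}=\pm(n-k)^{n-k}a_n^{\,n-k-1}a_k^{-(n-k-1)}$, and $\big(\textstyle\prod_{i\ \mathrm{large}}x_i\big)^{2(n-k)}=a_k^{\,2(n-k)}(1+o(1))$, whose product is $\pm k^{k}(n-k)^{n-k}a_n^{\,n-k-1}a_k^{\,n}(1+o(1))$. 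As $R$ is a polynomial in $a_k$, this asymptotics (valid for all generic $a_1,\dots,a_{n-1}$) forces $\deg_{a_k}R=n$ with leading coefficient exactly $\pm k^k(n-k)^{n-k}a_n^{\,n-k-1}$; only $a_n$ among $a_1,\dots,a_{n-1}$ survives because the other coefficients enter merely the subleading terms of the roots.

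\emph{Main obstacle.} The delicate point is the rigor of this last step: one must check uniformly that the contributions of $a_1,\dots,a_{k-1},a_{k+1},\dots,a_{n-1}$ to each root $x_i(a_k)$ are of strictly smaller order in $a_k$ than the leading term, so that they affect only lower powers of $a_k$ in $\prod_{i<j}(x_i-x_j)^2$ — routine Newton–Puiseux bookkeeping, but it must be carried out. An alternative, purely algebraic route is to expand the Sylvester determinant directly: using the positions of $a_k$ found above, the coefficient of $a_k^n$ breaks up, according to the choice of the $i$-th $P$-row or the $i$-th $P'$-row for each $i$, into products of a left $k\times k$ block (involving only $1,a_1,\dots,a_{k-1}$) and a right $(n-1-k)\times(n-1-k)$ block (involving only $a_{k+2},\dots,a_n$; the variable $a_{k+1}$ drops out entirely), so that the coefficient factors as $U(a_1,\dots,a_{k-1})\cdot V(a_{k+2},\dots,a_n)$; one then checks that $U$ is a constant and $V$ is a scalar multiple of $a_n^{\,n-k-1}$, e.g. by specialising to $P=x^n+a_kx^{n-k}+a_n$ and computing its discriminant from $\mathrm{Res}(P,P')=\mathrm{Res}\!\big(P,\,ka_kx^{n-k}+na_n\big)/a_n$, which also pins down the scalar $\pm k^k(n-k)^{n-k}$.
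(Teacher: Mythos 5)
Your proposal is correct in substance but follows a genuinely different route from the paper on almost every point. For irreducibility the paper stays entirely algebraic: it exhibits monomials $Aa_n^{n-1}$ and $Ba_{n-1}^n$ with $A,B\neq 0$ in $R$ and uses quasi-homogeneity (weight $j$ for $a_j$, total quasi-degree $n(n-1)$) to rule out any factorization; you instead parametrize $D$ by $(c,b)\mapsto (x-c)^2Q$ to get irreducibility of the set and then must separately prove that $R$ is \emph{reduced} — this is the one place where your sketch leans on an unproved assertion ("the tangency is transversal"), which should be backed by the standard computation that along a line $a_n\mapsto a_n+t$ through a generic point of $D$ the two colliding roots contribute a factor $(x_1-x_2)^2\sim -4t/Q(c)$, so $R$ has a simple zero there. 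For the degree in $a_n$ the paper isolates the unique monomial $Aa_n^{n-1}$ in the Sylvester determinant, while you use $\mathrm{Res}(P,P')=n^n\prod P(y_i)$ over the critical points $y_i$ (which do not depend on $a_n$) — cleaner and it also gives the leading coefficient $n^n$. Your observation that $R|_{a_{n-1}=a_n=0}\equiv 0$ because $x^2$ divides $P$ there is more conceptual than the paper's last-column argument. Finally, for the degree $n$ in $a_k$ and its leading coefficient $\pm k^k(n-k)^{n-k}a_n^{n-k-1}$, the paper row-reduces $S(P,P')$ and identifies the \emph{unique} product of entries containing $n$ factors $a_k$ (essentially your "alternative algebraic route"), whereas your primary argument is a Newton-polygon asymptotic as $a_k\to\infty$; your exponent and constant bookkeeping checks out ($a_k^{(k-1)-(n-k-1)+2(n-k)}=a_k^n$), and the Puiseux estimates you flag as the main obstacle are indeed routine since the neglected coefficients enter each root only at strictly lower order in $a_k$. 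The trade-off: the paper's proof is elementary and self-contained but combinatorially delicate; yours imports standard facts (constructibility of images, resultant product formulas, Puiseux expansions) in exchange for more transparent reasons why each statement is true.
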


\begin{rem}\label{rem1}
{\rm One can assign $j$ as quasi-homogeneous weight to the coefficient $a_j$, 
$1\leq j\leq n$ (because $a_j$ is a symmetric degree $j$ polynomial 
in the roots of $P$). It is well-known that $R$ is a quasi-homogeneous 
polynomial in $a$ of quasi-homogeneous degree $n(n-1)$. It equals 
$\pm \prod _{1\leq i<j\leq n}(x_i-x_j)^2$, 
where $x_i$, $x_j$ are the roots of $P$. In this 
product each difference is squared because when the coefficients 
$a_j$ take values corresponding to a loop in the space $\mathbb{C}^n$ 
circumventing the zero set of $R$, then generically two of the roots are 
interchanged.}
\end{rem}

\begin{nota}\label{notanota}
{\rm (1) We denote by 
$\Sigma$ 
the subset of $D$ on which 
the corresponding polynomial $P$ has a root of 
multiplicity at least $3$ and by 
$\Sigma _k$ its projection  
in the space $\mathbb{C}_k^{n-1}$ of the variables $a^k$. 
Derivations w. r. t. $x$ and $a_k$ 
are denoted respectively by $'$ and $\partial /\partial a_k$.

(2) We denote by $S(F_1,F_2)$ 
the {\em Sylvester matrix} of the polynomials 
$F_1=d_0x^{n_1}+d_1x^{n_1-1}+\cdots +d_{n_1}$ and 
$F_2=g_0x^{n_2}+g_1x^{n_2-1}+\cdots +g_{n_2}$ (considered as polynomials 
in the variable $x$). The matrix  
$S(F_1,F_2)$ is $(n_1+n_2)\times (n_1+n_2)$ and its first (resp.  
its $(n_2+1)$st) row equals

$$(d_0,d_1,\ldots ,d_{n_1},0,\ldots ,0)~~{\rm ,~resp.}~~
(g_0,g_1,\ldots ,g_{n_2},0,\ldots ,0)~,$$
its second and $(n_2+2)$nd rows are obtained by shifting these ones 
by one position to the right while adding a zero in the first position etc. 
For polynomials $G_1$, $G_2$ in $a_k$, with coefficients in $\mathbb{C}[a^k]$, 
we write $S(G_1,G_2,a_k)$ for their Sylvester matrix.  

(3) We denote by $D_k$ the subset in the space $\mathbb{C}_k^{n-1}$ 
of the variables $a^k$ defined by the condition 
$\tilde{D}_k:=$Res$(R,\partial R/\partial a_k,a_k)=0$. When $R$ is considered 
as a polynomial in the variable $a_k$ with 
coefficients in $\mathbb{C}[a^k]$, the set $D_k$ is the subset of 
$\mathbb{C}_k^{n-1}$ on which $R$ has a multiple root.}
\end{nota}

\begin{proof}[Proof of Lemma~\ref{lm1}]
Consider the matrix $S(P,P')$, see Notation~\ref{notanota}. To simplify the 
computation of its determinant we subtract, for $j=1,\ldots ,n-1$, its 
$(n-1+j)$th row multiplied by $1/(n-k)$ from the $j$th one. We denote 
by $S^*$ the newly obtained matrix. The only product 
of $2n-1$ entries in the determinant of $S^*$ which contains 
$n$ factors $a_k$ is the following one 
(we list to the left the entries and to the right, for each entry, 
the positions in which it is encountered):

$$\begin{array}{rcllcl}
-k/(n-k)&~~~~~~~~~~&(1,1)&(2,2)&\ldots&(k,k)\\ \\ 

a_n&&(k+1,k+n+1)&(k+2,k+n+1)&\ldots&(n-1,2n-1)\\ \\ 

(n-k)a_k&&(n,k+1)&(n+1,k+2)&\ldots&(2n-1,k+n) 
\end{array}$$
Up to a sign the product equals $k^k(n-k)^{n-k}a_n^{n-k-1}a_k^n$. 

The matrix $S(P,P')$ contains $n-1$ terms $a_n$, in positions $(j,j+n)$, 
$j=1$, $\ldots$, $n-1$. The product of these terms and of the constant 
terms in positions $(j+n-1,j)$, $j=1$, $\ldots$, $n$ give the only 
monomial $Aa_n^{n-1}$, $A\neq 0$, in $\det S(P,P')$. Irreducibility of $R$ 
follows from the fact that a quasi-homogeneous polynomial with 
quasi-homogeneous weight $j$ of 
the variable $a_j$ (see Remark~\ref{rem1}) 
and containing monomials $Aa_n^{n-1}$ and $Ba_{n-1}^n$, 
$B\neq 0$, cannot be represented as a product of two nonconstant 
quasi-homogeneous polynomials. The equality $R|_{a_{n-1}=a_n=0}\equiv 0$ follows 
from $S(P,P')$ having as nonzero entries in its last column only $a_n$ 
and $a_{n-1}$.
\end{proof}

\section{Where is the discriminant locally the graph 
of a function?}

\begin{tm}\label{prop1}
(1) Suppose that at a point $A\in D$ the corresponding polynomial $P$ has a 
double root $\lambda$ and $n-2$ simple roots. Then 

(i) if in addition $\lambda \neq 0$ (hence one does not have $a_{n-1}=a_n=0$), 
then for $k=1,\ldots ,n$, 
at this point the set $D$ is locally the graph 
of an analytic function in $a^k$;

(ii) if $\lambda =0$, then this property holds true only for $k=n$ and fails 
for $k=1,\ldots ,n-1$.

(2) Suppose that at a point $A\in D$ the corresponding polynomial $P$ has a 
root of multiplicity at least $3$. Then at this point the hypersurface $D$ 
is not smooth. 
\end{tm}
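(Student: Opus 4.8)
The plan is to reduce both statements to the computation of the gradient $\nabla_a R$ at $A$, the key tool being a local factorization of the discriminant near $A$.

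For part (1), near a point $A\in D$ at which $P$ has a double root $\lambda$ and $n-2$ simple roots $\mu_3,\dots,\mu_n$, I would introduce the local factorization $P=(x^2-\sigma x+p)\prod_{j=3}^n(x-\mu_j)$. The elementary symmetric functions $\sigma,p$ of the two roots near $\lambda$, together with $\mu_3,\dots,\mu_n$, depend holomorphically on $a$ in a neighborhood of $A$, and $a\mapsto(\sigma,p,\mu_3,\dots,\mu_n)$ is a local biholomorphism there (its inverse is the multiplication map, whose differential is invertible because the quadratic and the degree-$(n-2)$ factor are coprime at $A$). Applying $\mathrm{disc}(fg)=\mathrm{disc}(f)\,\mathrm{disc}(g)\,\mathrm{Res}(f,g)^2$ yields, in these coordinates, $R=\pm(\sigma^2-4p)\,Q$ with $Q(A)\neq 0$. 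Hence $\sigma^2-4p$ vanishes at $A$, the chain and product rules give $dR(A)=\pm Q(A)\,d(\sigma^2-4p)|_A\neq 0$ (so $D$ is smooth at $A$), and $dR(A)$ is a nonzero multiple of $d(\sigma^2-4p)|_A$.

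Next I would identify $dR(A)$ explicitly: up to a nonzero scalar it equals the linear form $\ell:=\sum_{k=1}^n\lambda^{n-k}\,da_k$. Indeed $\ell\neq 0$ since the coefficient of $da_n$ is $1$, and $\ell$ annihilates $T_AD$: parametrizing $D$ near $A$ by $(t,\mu_3,\dots,\mu_n)\mapsto a=\bigl(\text{coefficients of }(x-t)^2\prod_{j}(x-\mu_j)\bigr)$, so that $t$ is the double root, one has $P_{a(t,\mu)}(t)\equiv 0$; differentiating with respect to $t$ and to each $\mu_j$ and using $P_A'(\lambda)=0$ shows that $\ell$ vanishes on each of the spanning vectors $\partial a/\partial t,\ \partial a/\partial\mu_j$ of $T_AD$. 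Since $T_AD=\ker dR(A)$ is a hyperplane, $dR(A)=c\,\ell$ with $c\neq 0$, whence $\partial R/\partial a_k(A)=c\,\lambda^{n-k}$. For (i): $\lambda\neq 0$ makes this nonzero for every $k$, and the holomorphic implicit function theorem gives the graph. For (ii): $\lambda=0$ gives $\partial R/\partial a_k(A)=0$ for $k\leq n-1$, so the $a_k$-axis direction lies in $T_AD$; since a local graph of $a_k$ over $a^k$ has tangent space transverse to that axis, and $D$ is smooth at $A$, the graph property fails for $k\leq n-1$, while $\partial R/\partial a_n(A)=c\neq 0$ keeps it for $k=n$.

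For part (2), let $\lambda$ be a root of multiplicity $m\geq 3$ and factor $P=u\cdot v$ near $A$, where $u$ is the monic degree-$m$ factor carrying the root cluster near $\lambda$ and $v$ carries the remaining roots; then $u,v$ are coprime at $A$, so $R=\pm\,\mathrm{disc}_x(u)\cdot\mathrm{disc}_x(v)\cdot\mathrm{Res}(u,v)^2$ with $\mathrm{Res}(u,v)(A)\neq 0$. If $v$ has a multiple root at $A$, then $\mathrm{disc}_x(u)$ and $\mathrm{disc}_x(v)$ both vanish at $A$, so $R$ vanishes there to order $\geq 2$ and $dR(A)=0$. Otherwise $R$ equals $\mathrm{disc}_x(u)$ times a unit near $A$, and it suffices to check that $\mathrm{disc}_x(u)$ vanishes to order $\geq 2$ at $u=(x-\lambda)^m$: after $x\mapsto x-\lambda$ one must show $\mathrm{disc}_x\bigl(x^m+c_1x^{m-1}+\cdots+c_m\bigr)$ has neither constant nor linear term at $c=0$; assigning weight $i$ to $c_i$ this discriminant is quasi-homogeneous of weight $m(m-1)$, and for $m\geq 3$ one has $m(m-1)>m\geq i$, so no monomial $c_i$ occurs, while its value at $c=0$ is $\mathrm{disc}_x(x^m)=0$. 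In all cases $dR(A)=0$, so $D$ is not smooth at $A$. The main obstacle is the precise identification of $dR(A)$ with $\ell$ in part (1): the factorization $R=\pm(\sigma^2-4p)\,Q$ is what makes this clean, the naive alternative of differentiating $\mathrm{Res}(P,P',x)$ or $\prod_{i<j}(x_i-x_j)^2$ directly being hampered by the non-holomorphy of the colliding roots $x_1,x_2$ as functions of $a$. The remaining ingredients — the implicit function theorem, the transversality characterization of being a graph, and the weight count — are routine.
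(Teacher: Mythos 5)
Your argument is correct, and it takes a genuinely different route from the paper's. The paper writes $P=(x+\lambda)^2Q$, parametrizes $D$ near $A$ by $(\lambda,b_1,\ldots,b_{n-2})$, and computes all maximal minors of the Jacobian by explicit column operations, obtaining $\det J_k=(-1)^n2\lambda^{n-k}Q(-\lambda)$ (Lemma~\ref{lm2}); the graph property over $a^k$ is then read off from the (non)vanishing of these minors. You instead exploit the multiplicativity $\mathrm{disc}(fg)=\mathrm{disc}(f)\,\mathrm{disc}(g)\,\mathrm{Res}(f,g)^2$ together with the local splitting of $P$ into coprime factors to write $R$ as a unit times $\sigma^2-4p$ (resp.\ times $\mathrm{disc}(u)$ in part (2)), and you identify $\nabla R(A)$ by duality: $dR(A)$ must be proportional to the explicit conormal $\ell=\sum_k\lambda^{n-k}\,da_k$ because both are nonzero linear forms annihilating the hyperplane $T_AD$. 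The two computations agree, giving $\partial R/\partial a_k(A)\propto\lambda^{n-k}$ with $\lambda$ the double root. Your route avoids the determinant computation entirely and yields a more fully spelled-out part (2) — the paper disposes of it with ``all claims follow from the lemma,'' whereas your quasi-homogeneous count (a linear monomial $c_i$ would have weight $i\le m<m(m-1)$) explicitly shows $\mathrm{disc}(u)$ has vanishing $1$-jet, and your case split also covers a degenerate complementary factor; the paper's route buys the explicit minor formula, which it reuses. Two points you should make explicit to close the argument: first, that $\partial a/\partial t$ and the $\partial a/\partial\mu_j$ really span $T_AD$ — this holds because $(t,\mu)\mapsto a$ factors as $(t,\mu)\mapsto(2t,t^2,\mu)$ followed by your biholomorphism, hence is an immersion whose image lies in the smooth hypersurface $D$; second, that the final inference of part (2), from $dR(A)=0$ to non-smoothness of the set $D$ at $A$, uses that $R$ is reduced (otherwise the zero set of a square could be a manifold), which is exactly where the irreducibility assertion of Lemma~\ref{lm1} enters.
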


\begin{proof}[Proof of Theorem~\ref{prop1}]
Set $P=(x+\lambda )^2Q$, 
$Q:=(x^{n-2}+b_1x^{n-3}+\cdots +b_{n-2})$. Hence 

$$\begin{array}{lll}
a_1=2\lambda +b_1~~,~~&a_2=\lambda ^2+2\lambda b_1+b_2~,&
a_3=\lambda ^2b_1+2\lambda b_2+b_3~,~\ldots ~,\\ \\ 
a_{n-2}=\lambda ^2b_{n-4}+2\lambda b_{n-3}+b_{n-2}~,&
a_{n-1}=\lambda ^2b_{n-3}+2\lambda b_{n-2}~,&
a_n=\lambda ^2b_{n-2}~.\end{array}$$
Consider the $(n-1)\times n$ matrix 
$\tilde{J}:=(\partial (a_1,a_2,\ldots ,a_n)/\partial 
(\lambda ,b_1,\ldots ,b_{n-2}))^T$. It equals 

$$\left( \begin{array}{cccccccc}
2&2\lambda +2b_1&2\lambda b_1+2b_2&2\lambda b_2+2b_3&
\cdots &2\lambda b_{n-4}+2b_{n-3}&
2\lambda b_{n-3}+2b_{n-2}&2\lambda b_{n-2}\\ \\ 
1&2\lambda&\lambda ^2&0&\cdots&0&0&0\\ \\ 
0&1&2\lambda&\lambda ^2&\cdots&0&0&0\\ \\ 
0&0&1&2\lambda&\cdots&0&0&0\\ \\
\vdots&\vdots&\vdots&\vdots&\ddots&\vdots&\vdots&\vdots \\ \\ 
0&0&0&0&\cdots&1&2\lambda&\lambda ^2\end{array}\right)$$
We denote by $C_k$ the $k$th column of the matrix $\tilde{J}$ and by 
$J_k$ its submatrix obtained by deleting $C_k$.  
As $Q(-\lambda )\neq 0$, all claims of the 
theorem follow from the following lemma:

\begin{lm}\label{lm2}
One has $\det J_k=(-1)^n2\lambda ^{n-k}Q(-\lambda )$.
\end{lm}
\end{proof}

\begin{proof}[Proof of Lemma~\ref{lm2}]
Denote by $S_k$ the $(n-1)$-vector-column 
$(2b_{k-1},0,\ldots ,0,\lambda ,1,0,\ldots ,0)^T$,  
where $2\leq k\leq n-1$ and $\lambda$ is preceded by $k-2$ zeros. 
We set $S_1:=(2,1,0,\ldots ,0)^T$, $S_0:=(0,\ldots ,0)^T$. 
Hence $C_k=\lambda S_{k-1}+S_k$. Set $J_k:=J_k'+J_k''$, where 

$$J_k'=\lambda (C_1,\ldots ,C_{k-1},S_k,C_{k+2},\ldots ,C_n)~~,~~
J_k''=(C_1,\ldots ,C_{k-1},S_{k+1},C_{k+2},\ldots ,C_n)~.$$
Thus $\det J_k=\lambda \det J_k'+\det J_k''$. One has $\det J_k''=0$. Indeed, 

$$\det J_k''=\det (C_1,\ldots ,C_{k-1},S_{k+1},\lambda S_{k+1}+S_{k+2},
\lambda S_{k+2}+S_{k+3},\ldots ,\lambda S_{n-2}+S_{n-1},\lambda S_{n-1})~.$$
Subtract consecutively for $j=k$, $\ldots$, $n-2$, 
the $j$th column multiplied by $\lambda$ from the $(j+1)$st one. This 
does not change the determinant. After these subtractions 
all entries of the last column 
are zeros, so $\det J_k''=0$ and $\det J_k=\lambda \det J_k'$. 
After this set $J_k':=J_k^*+J_k^{**}$, where 

$$J_k^*=\lambda ^2(C_1,\ldots ,C_{k-1},S_k,S_{k+1},C_{k+3},\ldots ,C_n)~~,~~
J_k^{**}=\lambda (C_1,\ldots ,C_{k-1},S_k,S_{k+2},C_{k+3},\ldots ,C_n)~.$$
By analogy with $\det J_k''=0$ we show that $\det J_k^{**}=0$. 
Hence $\det J_k=\lambda ^2\det J_k^*$. 
Continuing like this we conclude that 

$$\begin{array}{ccl}\det J_k&=&
\lambda ^{n-k}\det (C_1,\ldots ,C_{k-1},S_k,S_{k+1},\ldots ,S_{n-1})\\ \\ 
&=&\lambda ^{n-k}\det (S_1,\lambda S_1+S_2,\ldots ,\lambda S_{k-2}+S_{k-1},
S_k,S_{k+1},\ldots ,S_{n-1})~.\end{array}$$
We subtract then consecutively 
(for $j=1,\ldots ,k-2$) the $j$th column multiplied by 
$\lambda$ from the $(j+1)$st one. This does not change $\det J_k$,~so

$$\det J_k=\lambda ^{n-k}\Delta ~~,~~\Delta :=\det (S_1, S_2,\ldots ,S_{k-1},
S_k,S_{k+1},\ldots ,S_{n-1})~~.~~{\rm i.~e.}$$

$$\Delta =\left| \begin{array}{ccccccc}
2&2b_1&2b_2&2b_3&\cdots &2b_{n-3}&
2b_{n-2}\\ \\ 
1&\lambda&0&0&\cdots&0&0\\ \\ 
0&1&\lambda&0&\cdots&0&0\\ \\ 
0&0&1&\lambda&\cdots&0&0\\ \\
\vdots&\vdots&\vdots&\vdots&\ddots&\vdots&\vdots \\ \\ 
0&0&0&0&\cdots&1&\lambda\end{array}\right|$$ 
To compute $\Delta$ we subtract consecutively (for $j=1,\ldots ,n-2$) 
the $j$th column multiplied by $\lambda$ from the $(j+1)$st one. We get 

$$\Delta =2\left| \begin{array}{cccccc}
1&b_1-\lambda&b_2-b_1\lambda +\lambda ^2&\cdots &
b_{n-3}-\lambda b_{n-4}+\cdots +(-1)^{n-3}\lambda ^{n-3}&
Q(-\lambda )\\ \\ 
1&0&0&\cdots&0&0\\ \\ 
0&1&0&\cdots&0&0\\ \\ 
\vdots&\vdots&\vdots&\ddots&\vdots&\vdots \\ \\ 
0&0&0&\cdots&1&0\end{array}\right|$$ 
Hence $\Delta =(-1)^n2Q(-\lambda )$. 
\end{proof}

\begin{rems}
{\rm (1) Consider the product $P_1P_2$, where $P_1:=x^m+b_1x^{m-1}+\cdots +b_m$, 
$P_2:=x^l+c_1x^{l-1}+\cdots +c_l$ and $b_i$, $c_j$ are complex parameters. 
Suppose that for some value of these parameters the polynomials $P_1$ and $P_2$ 
have no root in common. Then locally (close to this value) the discriminant set 
of $P_1P_2$ (defined by analogy with $D$) is diffeomorphic to the direct 
product of the discriminant sets of $P_1$ and $P_2$ at the respective values of 
$b_i$ and $c_j$. This follows from the Lemma about the product 
on p. 12 of~\cite{Me}. (In \cite{Me} the author considers the case of 
real polynomials, but the proof of the Lemma about the product is carried out 
in the complex case in exactly the same way as in the real one.) 

(2) Suppose that at a point $A\in D$ the polynomial $P$ has $k$ double roots 
and $n-2k$ simple ones. Then locally, at $A$, the set $D$ is the 
transversal intersection of $k$ analytic hypersurfaces each of which 
satisfies statements (i) and (ii) of Proposition~\ref{prop1}. Transversality 
follows from part (1) of the present remarks, the analogs of 
statements (i) and (ii) of 
the proposition are proved in exactly the same way as the proposition itself 
(in the proof of the proposition we do not use the fact 
that the rest of the roots of $P$ are simple).}
\end{rems}

\begin{prop}\label{lmdivisible}
(1) For $k\leq n-1$ the polynomial $\tilde{D}_k$ is not divisible by 
$a_i$ for $k\neq i\neq n$. 

(2) The polynomial $\tilde{D}_n$ is not divisible by 
$a_i$ for $i\leq n-1$. 

(3) If $k\leq n-2$, then 
$\tilde{D}_k$ is divisible by $a_n^{n-k-1}$ and not divisible by 
$a_n^{n-k}$. 

(4) The polynomial $\tilde{D}_{n-1}$ is divisible by $a_n$ and not divisible 
by $a_n^2$.
\end{prop}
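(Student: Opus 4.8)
My approach rests on the factorization $\tilde D_k=\mathrm{Res}(R,\partial R/\partial a_k,a_k)=\pm\,\ell_k\cdot\mathrm{disc}_{a_k}(R)$, where $\ell_k$ denotes the leading coefficient of $R$ viewed as a polynomial in $a_k$ and $\mathrm{disc}_{a_k}(R)$ its discriminant. By Lemma~\ref{lm1}, $\ell_k=\pm k^k(n-k)^{n-k}a_n^{\,n-k-1}$ for $k\le n-1$, while for $k=n$ quasi-homogeneity (Remark~\ref{rem1}) together with the monomial $Aa_n^{n-1}$, $A\ne0$, exhibited in the proof of Lemma~\ref{lm1} shows that $\ell_n$ is a nonzero constant; thus every assertion below splits into a contribution of the explicit monomial $\ell_k$ and one of $\mathrm{disc}_{a_k}(R)$. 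I shall also use repeatedly that $R|_{a_n=0}=\pm a_{n-1}^2R^{(n-1)}$ with $R^{(n-1)}=\mathrm{disc}(x^{n-1}+a_1x^{n-2}+\cdots+a_{n-1})$ --- this comes from $P|_{a_n=0}=x(x^{n-1}+\cdots+a_{n-1})$ and $\mathrm{disc}(fg)=\mathrm{disc}(f)\,\mathrm{disc}(g)\,\mathrm{Res}(f,g)^2$ --- and that $R^{(n-1)}$, being irreducible and hence separable in characteristic $0$, has $\mathrm{disc}_{a_k}(R^{(n-1)})\ne0$ whenever it has positive degree in $a_k$.

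\emph{Parts (1) and (2).} In both parts $i\ne n$ and $i\ne k$, so setting $a_i=0$ does not lower $\deg_{a_k}R$, and $a_i\mid\tilde D_k$ is equivalent to $D_k\supseteq\{a_i=0\}$. On the open set where $\ell_k\ne0$ --- that is $\{a_n\ne0\}$ when $k\le n-1$, and the whole space when $k=n$ --- a point $b$ lies in $D_k$ iff for some value of $a_k$ the corresponding point of $D$ is either singular or a point at which $\partial R/\partial a_k$ vanishes; by Theorem~\ref{prop1} and the Remarks following it these are exactly the polynomials $P$ with a root of multiplicity $\ge3$, or with at least two double roots, or --- for $k\le n-1$ only --- with a double root at $0$ (this last case being the one where $D$ is smooth but not the graph of a function of $a^k$, which by Theorem~\ref{prop1}(1)(ii) does not occur for $k=n$). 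Hence, over $\{a_n\ne0\}$, $D_k$ is contained in the union of the Zariski closures of $\pi_k(\Sigma)$, $\pi_k(T)$ and $\pi_k(Z)$, where $\pi_k$ is the projection forgetting $a_k$, the sets $\Sigma$ (polynomials with a root of multiplicity $\ge3$) and $T$ (polynomials with at least two double roots; $T$ empty for $n\le3$) are irreducible of dimension $n-2$, and $Z=\{a_{n-1}=a_n=0\}$. If $a_i$ divided $\tilde D_k$, then, as $\{a_i=0\}\cap\{a_n\ne0\}$ is dense in the irreducible hypersurface $\{a_i=0\}$ (recall $i\ne n$), the hyperplane $\{a_i=0\}$ would be contained in one of $\overline{\pi_k(\Sigma)}$, $\overline{\pi_k(T)}$, $\overline{\pi_k(Z)}$; but $\overline{\pi_k(\Sigma)}$ and $\overline{\pi_k(T)}$ are irreducible of dimension $\le n-2=\dim\{a_i=0\}$, so containing $\{a_i=0\}$ would force $\Sigma$ (resp.\ $T$) $\subseteq\{a_i=0\}$ --- impossible, since a generic polynomial with $a_i=0$ has $n$ distinct roots whereas a generic element of $\Sigma$ or of $T$ does not --- while $\pi_k(Z)$ is $\{a_{n-1}=a_n=0\}$ when $k\le n-2$ and $\{a_n=0\}$ when $k=n-1$, neither of which contains $\{a_i=0\}$ since $i\ne n$ (and for $k=n$ the double-root-at-$0$ locus contributes nothing to $D_n$). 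This proves parts (1) and (2).

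\emph{Part (3).} Let $k\le n-2$. The explicit factor $\ell_k$ contributes exactly $a_n^{\,n-k-1}$, so it remains to check $a_n\nmid\mathrm{disc}_{a_k}(R)$. Here $R|_{a_n=0}=\pm a_{n-1}^2R^{(n-1)}$ has $a_k$-degree $n-1$, one less than $\deg_{a_k}R$ (Lemma~\ref{lm1} in degree $n-1$), reflecting that among the $n$ roots of $R$ in $a_k$ exactly one escapes to infinity as $a_n\to0$. A short computation with the Sylvester matrix (or with the roots) then gives the specialization formula for the discriminant under a drop of degree by one, $\mathrm{disc}_{a_k}(R)|_{a_n=0}=\bigl(\mathrm{lc}_{a_k}(R|_{a_n=0})\bigr)^2\cdot\mathrm{disc}_{a_k}(R|_{a_n=0})$, and the right-hand side equals a power of $a_{n-1}$ times $\mathrm{disc}_{a_k}(R^{(n-1)})$, which is nonzero. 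Hence $\mathrm{disc}_{a_k}(R)|_{a_n=0}\ne0$, so $\tilde D_k$ is divisible by $a_n^{\,n-k-1}$ but not by $a_n^{\,n-k}$.

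\emph{Part (4) and the main obstacle.} For $k=n-1$ the coefficient $\ell_{n-1}$ is a nonzero constant and $\deg_{a_{n-1}}R$ does not drop at $a_n=0$, so $\tilde D_{n-1}$ and $\mathrm{disc}_{a_{n-1}}(R)$ vanish to the same order along $\{a_n=0\}$; moreover $\mathrm{disc}_{a_{n-1}}(R)|_{a_n=0}=\mathrm{disc}_{a_{n-1}}(R|_{a_n=0})=0$ because $R|_{a_n=0}=\pm a_{n-1}^2R^{(n-1)}$ has a root of multiplicity $\ge2$ at $a_{n-1}=0$; this already gives $a_n\mid\tilde D_{n-1}$. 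To show $a_n^2\nmid\tilde D_{n-1}$ I would track the two roots $\beta_1,\beta_2$ of $R$ in $a_{n-1}$ that collide at $a_{n-1}=0$ as $a_n\to0$: from $R|_{a_{n-1}=a_n=0}\equiv0$ (Lemma~\ref{lm1}) and $R|_{a_n=0}=\pm a_{n-1}^2R^{(n-1)}$ one reads off the local form $R=w\,a_n+u\,a_{n-1}^2+\cdots$ near $(a_{n-1},a_n)=(0,0)$, where $u=\pm R^{(n-1)}|_{a_{n-1}=0}$ is not identically zero and the omitted monomials have $(a_{n-1},a_n)$-exponent vectors lying strictly above the segment joining $(2,0)$ to $(0,1)$; hence $\beta_1-\beta_2$ behaves like $\sqrt{a_n}$ and $\mathrm{disc}_{a_{n-1}}(R)$ vanishes to order exactly $1$, \emph{provided} $w=\partial R/\partial a_n|_{a_{n-1}=a_n=0}$ is not identically zero. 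Establishing $w\not\equiv0$ is the real difficulty; I would do it by observing that $R|_{a_{n-1}=0}$ is the discriminant of $x^2h(x)+a_n$ with $h(x)=x^{n-2}+a_1x^{n-3}+\cdots+a_{n-2}$, so that its two roots near $x=0$ are $\pm\sqrt{-a_n/a_{n-2}}+\cdots$ (note $h(0)=a_{n-2}$), which forces that discriminant to vanish to order exactly $1$ in $a_n$, i.e.\ $w\not\equiv0$. The recurring point of the whole proposition is precisely this bookkeeping of the order of vanishing of a resultant or discriminant along the degree-drop locus $\{a_n=0\}$ --- parts (3) and (4) being its two faces --- and the reliable route is the root / Newton-polygon analysis above rather than a frontal manipulation of Sylvester determinants.
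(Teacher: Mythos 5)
Your proof is correct in all four parts, but it takes a genuinely different route from the paper's. The paper stays entirely inside the Sylvester matrices: it specializes all but one or two of the coefficients to zero, observes that the resulting matrices $T_k$ become sparse enough for their determinants to be evaluated by hand (Statements~\ref{A}, \ref{B}, \ref{C}), and reads the divisibility off explicit monomials such as $\Omega _3a_n^{(n-1)^2+n(n-k-1)}$. You instead start from the identity $\mathrm{Res}(R,\partial R/\partial a_k,a_k)=\pm \ell _k\,\mathrm{disc}_{a_k}(R)$ and combine three structural facts: the product formula giving $R|_{a_n=0}=\pm a_{n-1}^2R^{(n-1)}$, the specialization formula for a discriminant when the degree drops by one, and the description of $\{R=\partial R/\partial a_k=0\}$ supplied by Theorem~\ref{prop1} and the Remarks. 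This converts parts (1)--(2) into a dimension count on the projections of $\Sigma$ and of the two-double-root locus, and parts (3)--(4) into a Newton-polygon analysis of colliding roots of $R$ in $a_k$ along $\{a_n=0\}$. What your approach buys is an explanation of where each factor of $a_n$ comes from: all $n-k-1$ of them in part (3) come from the leading coefficient $\ell _k$, and the single one in part (4) comes from exactly two roots of $R$ in $a_{n-1}$ colliding at $a_{n-1}=0$ like $\pm\sqrt{a_n}$. The price is a reliance on several standard facts the paper's computational route never needs: that $R$, being irreducible, is reduced, so that smooth points of $D$ are exactly the points where $\nabla R\neq 0$ (this is what lets you translate ``not locally a graph over $a^k$'' from Theorem~\ref{prop1} into $\partial R/\partial a_k=0$); that an irreducible polynomial in characteristic zero is separable (needed for $\mathrm{disc}_{a_k}(R^{(n-1)})\not\equiv 0$); and the two discriminant specialization formulas, which you assert rather than prove. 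All of these are true, and your uses of them are sound. Two small points should be tightened: the reason $\Sigma\not\subseteq \{a_i=0\}$ (and likewise for the two-double-root locus) is simply that $(x-\beta )^3q(x)$ with generic $\beta$ and $q$ has all coefficients nonzero --- your stated justification about generic polynomials with $a_i=0$ having distinct roots argues the wrong inclusion --- and in part (4) you should record that $u=\pm R^{(n-1)}|_{a_{n-1}=0}\not\equiv 0$ because $R^{(n-1)}$ is irreducible of positive degree in $a_{n-1}$ and hence not divisible by $a_{n-1}$.
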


\begin{proof}[Proof of Proposition~\ref{lmdivisible}]
Throughout the proof the letter $\Omega$ (indexed or not) 
stands for nonspecified 
nonzero constants. We set $T_k:=S(R,\partial R/\partial a_k,a_k)$. Recall that 
$T_k$ is $(2n-1)\times (2n-1)$ for $k\leq n-1$ and $(2n-3)\times (2n-3)$ 
for $k=n$, see Lemma~\ref{lm1}.   

\begin{st}\label{A}
For $a_i=0$, $k\neq i\neq n$, $k<n$, one has 
$R=\Omega _1a_k^na_n^{n-k-1}+\Omega _2a_n^{n-1}$.
\end{st} 

\begin{proof}
Indeed, in this case one computes $R$ easily if one subtracts for 
$j=1$, $\ldots$, $n-1$ the $(n-1+j)$th row of $S(P,P')$ 
multiplied by $1/(n-k)$ from 
its $j$th one. This doesn't change $\det S(P,P')=R$ 
and the matrix obtained from $S(P,P')$ has only the following nonzero entries: 

$$\begin{array}{ccccccc}
-k/(n-k)&&(j,j)&,&a_n&&(j,j+n)\\
&{\rm in~positions}&&&&{\rm in~positions}&\\ 
n&&(n-1+\nu ,\nu )&,&(n-k)a_k&&(n-1+\nu ,\nu +k)\end{array}$$
($1\leq j\leq n-1$, $1\leq \nu \leq n$). To prove Statement~\ref{A} with 
this form of $\det S(P,P')$ 
is easy. 
\end{proof}

Statement~\ref{A} implies that for $a_i=0$, $k\neq i\neq n$, $k<n$, the matrix 
$T_k$ has nonzero entries only 
$\Omega _1a_n^{n-k-1}$, $\Omega _2a_n^{n-1}$ and $n\Omega _1a_n^{n-k-1}$, 
respectively in positions $(j,j)$, $(j,j+n)$, $j=1,\ldots ,n-1$, and 
$(n-1+\nu ,\nu )$, 
$\nu =1,\ldots ,n$. Clearly 
$\det T_k=\Omega _3a_n^{(n-1)^2+n(n-k-1)}\not\equiv 0$. Hence 
$\tilde{D}_k$ is not divisible by $a_i$ for $i\neq n$. Part (1) is proved.

To prove part (2) for $i<n-1$ we use Statement~\ref{A} with $k=n-1$. Hence 
the $(2n-3)\times (2n-3)$-matrix 
$T_{n-1}|_{a_i=0,n-1\neq i\neq n}$ has nonzero entries only 

$$\begin{array}{ccccccc}
\Omega _2&&(j,j)&,&\Omega _1a_{n-1}^n&{\rm in~positions}&(j,n-1+j)\\
&{\rm in~positions}&&&&&\\ 
(n-1)\Omega _2&&(n-2+\nu ,\nu )&&&&
\end{array}$$
($1\leq j\leq n-2$, $1\leq \nu \leq n-1$). It is easy to show 
that its determinant equals $\Omega _4a_{n-1}^{n(n-2)}\not\equiv 0$. 

To prove part (2) for $i=n-1$ we apply Statement~\ref{A} with $k=n-2$. 
Hence $T_n$ has nonzero entries only 
(with $j$ and $\nu$ as above) 

$$\begin{array}{ccccccc}
\Omega _2&&(j,j)&,&\Omega _1a_{n-2}^n&&
(j,n-2+j)\\
&{\rm in~positions}&&&&{\rm in~positions}&\\ 
(n-1)\Omega _2&&(n-2+\nu ,\nu )&,&\Omega _1a_{n-2}^n&&(n-2+\nu ,n-2+\nu )
\end{array}$$
Its determinant equals $\Omega _5a_{n-2}^{n(n-1)}\not\equiv 0$. 
Part (2) is proved. 

To prove part (3) consider the $(2n-1)\times (2n-1)$-matrix $T_k$. 
It has two entries in its first column, in positions $(1,1)$ and $(n,1)$. 
By Lemma~\ref{lm1} they are of the form $\Omega a_n^{n-k-1}$ and 
$n\Omega a_n^{n-k-1}$. This 
proves the first statement of part (3).
To prove its second statement consider for $k\leq n-2$ 
the polynomial $P^0:=P|_{a_i=0,k\neq i\neq n-1}=x^n+a_kx^{n-k}+a_{n-1}x$. 
Set $\delta :=\det S(P^0,{P^0}')$, $\delta _1:=\det S(P^0/x,{P^0}')$.  

\begin{st}\label{B}
One has 
$\delta =a_{n-1}\delta _1=\Omega _5a_{n-1}^{n}+\Omega _6a_k^{n-1}a_{n-1}^{n-k}$.
\end{st}

\begin{proof}
In its last column 
the matrix $S(P^0,{P^0}')$ has 
a single nonzero entry (namely $a_{n-1}$, in position $(2n-1,2n-1)$), 
so $\delta =a_{n-1}\delta _1$. 
To compute easily 
$\delta _1$ 
we subtract for $j=n$, $n+1$, $\ldots$, $2n-2$ the $j$th row of 
$S(P^0/x,{P^0}')$ 
from its $(j-n+1)$st row. This makes disappear the terms $a_{n-1}$ in the first 
$n-1$ rows. After the subtractions the first $n-1$ rows have entries 
$\Omega _*$ in positions $(j,j)$, $\Omega 'a_k$ in positions $(j,j+k)$ 
and zeros elsewhere. 

Then we subtract for $i=1,\ldots ,n-1$ the $i$th row multiplied by a 
constant from the $(i+n-1)$st one. The constant is chosen such that after the 
subtraction the $(i+n-1)$st row contains no term $\Omega ^*a_k$. 
After all these subtractions the matrix obtained from $S(P^0/x,{P^0}')$ has 
nonzero terms in the following positions (and zeros elsewhere):

$$\begin{array}{ccccccc}
\Omega _*&&(j,j)&,&\Omega 'a_k&&
(j,j+k)\\
&{\rm in~positions}&&&&{\rm in~positions}&\\ 
\Omega _{**}&&(j+n-1,j)&,&\Omega _{***}a_{n-1}&&
(j+n-1,j+n-1) 
\end{array}$$
($1\leq j\leq n-1$). 
Hence $\delta =a_{n-1}\delta _1=\Omega _5a_{n-1}^{n}+\Omega _6a_k^{n-1}a_{n-1}^{n-k}$. 
\end{proof}

Thus the matrix $T_k|_{a_i=0,k\neq i\neq n-1}$ 
has only the following nonzero entries:

$$\begin{array}{ccccccc}
\Omega _6a_{n-1}^{n-k}&&(j,j+1)&,&\Omega _5a_{n-1}^n&{\rm in~positions}&
(j,n+j)\\
&{\rm in~positions}&&&&&\\ 
(n-1)\Omega _6a_{n-1}^{n-k}&&(n-1+\nu ,\nu +1)&,&1\leq j\leq n-1&~~~,~~~&
1\leq \nu \leq n~. 
\end{array}$$

We consider the expansion of $\det T_k$ in a series in $a_n$. Our aim is 
to show that the initial term equals $Wa_n^{n-k-1}$ with $W\not\equiv 0$ with 
which part (3) will be proved. When expanding $\det T_k$ w.~r.~t. its 
first column (where it has just two nonzero entries) one gets 

$$\det T_k=(-1)^{1+1}\Omega a_n^{n-k-1}\det (T_k)_{1,1}+
(-1)^{n+1}n\Omega a_n^{n-k-1}\det (T_k)_{n,1}~,$$
where $(T_k)_{p,q}$ means the matrix obtained from $T_k$ by deleting its $p$th 
row and its $q$th column. In the first column of the matrix $(T_k)_{1,1}$ 
(resp. $(T_k)_{n,1}$) only 
the entry in position $(n-1,1)$ (resp. $(1,1)$) does not vanish for $a_n=0$, 
see Lemma~\ref{lm1} and Statement~\ref{B}. 
For $a_i=0$, $k\neq i\neq n-1$, this entry equals 
$(n-1)\Omega _6a_{n-1}^{n-k}$ (resp. 
$\Omega _6a_{n-1}^{n-k}$, see Statement~\ref{B}). Thus 

$$\begin{array}{cccc}
\det (T_k)_{1,1}&=&
(-1)^{n-1+1}(n-1)\Omega _6a_{n-1}^{n-k}(\det ((T_k)_{1,1})_{n-1,1}|_{a_n=0})
+O(\sum _{k\neq i\neq n-1}|a_i|)&,\\ \\ 
\det (T_k)_{n,1}&=&
(-1)^{1+1}\Omega _6a_{n-1}^{n-k}(\det ((T_k)_{n,1})_{1,1}|_{a_n=0})+
O(\sum _{k\neq i\neq n-1}|a_i|)&.
\end{array}$$ 
Now observe that each of the matrices 

$$((T_k)_{1,1})_{n-1,1}|_{a_i=0, k\neq i\neq n-1}~~~{\rm and}~~~ 
((T_k)_{n,1})_{1,1}|_{a_i=0, k\neq i\neq n-1}$$ 
is equal to the Sylvester matrix 
$S(~\Omega _5a_{n-1}^{n}+\Omega _6a_k^{n-1}a_{n-1}^{n-k}~,~
\Omega _6(n-1)a_k^{n-2}a_{n-1}^{n-k}~,~a_k~)=:S'$ whose determinant is 
of the form $\Omega ^{***}a_{n-1}^{n(n-2)+(n-k)(n-1)}\not\equiv 0$, 
therefore 
$$\begin{array}{ccccc}
W_{a_i=0, k\neq i\neq n-1}&=&\Omega \Omega _6a_{n-1}^{n-k}
\left( (-1)^{1+1}(-1)^{n-1+1}(n-1)+
(-1)^{n+1}(-1)^{1+1}n\right) \det S'&\\ \\ 
&=&\Omega \Omega _6a_{n-1}^{n-k}(-1)^{n-1}\det S'&\not\equiv &0~.\end{array}$$  
This proves part (3). To prove part (4) we need  

\begin{st}\label{C} The polynomial $R$ is of the form 
$a_{n-1}^2U(a)+a_nV(a)$, where $U$ and $V|_{a_n=0}\not\equiv 0$ 
are polynomials.
\end{st}

\begin{proof}
Set $S^0:=S(P,P')|_{a_n=0}$. The only nonzero entry in the last column of 
$S^0$ is 
$a_{n-1}$ in position $(2n-1,2n-1)$, so 
$R|_{a_n=0}=a_{n-1}\det ((S^0)_{2n-1,2n-1})$. 
Both nonzero entries in the last column of $(S^0)_{2n-1,2n-1}$ equal 
$a_{n-1}$, so $R|_{a_n=0}$ is divisible by $a_{n-1}^2$ hence 
$R=a_{n-1}^2U(a)+a_nV(a)$. There remains to show that 
$V|_{a_n=0}\not\equiv 0$. To this end consider the matrix 
$M:=S(P,P')|_{a_{n-1}=0}$. One has $\det M=(-1)^{n-1+2n-1}a_n\det ((M)_{n-1,2n-1})$. 
For $a_i=0$, $i\neq n-2$, the only nonzero entries of $(M)_{n-1,2n-1}$ are 

$$\begin{array}{ccccccc}
1&&(j,j)&,&a_{n-2}&&
(j,n-2+j)\\
&{\rm in~positions}&&&&{\rm in~positions}&\\ 
n&&(n-2+\nu ,\nu )&,&2a_{n-2}&&(n-2+\nu ,n-2+\nu )
\end{array}$$
($1\leq j\leq n-2$, $1\leq \nu \leq n-1$). Hence 
$\det ((M)_{n-1,2n-1})|_{a_i=0,i\neq n-2})=\Omega ^{\omega}a_{n-2}^{n-1}\not\equiv 0$.
\end{proof}

Statement \ref{C} implies that the last column of 
$T_{n-1}|_{a_n=0}$ contains only zeros, so  
$\tilde{D}_{n-1}$ is divisible by $a_n$. Denote by $t_{i,j}$ the entry of 
$T_{n-1}$ in position $(i,j)$. Hence 

$$\det T_{n-1}=
(-1)^{3n-5}t_{n-2,2n-3}\det ((T_{n-1})_{n-2,2n-3})+
(-1)^{4n-6}t_{2n-3,2n-3}\det ((T_{n-1})_{2n-3,2n-3})$$
The terms $t_{n-2,2n-3}$ and $t_{2n-3,2n-3}$ are divisible by $a_n$ and 
$t_{n-2,2n-3}=a_nV(a)$ is not divisible by $a_n^2$. 
All entries in the last column of 
$(T_{n-1})_{2n-3,2n-3}$ are divisible by $a_n$ (see Statement~\ref{C}), so 
$t_{2n-3,2n-3}\det ((T_{n-1})_{2n-3,2n-3})=O(a_n^2)$. 
On the other hand the nonzero entries of the matrix 
$\tilde{T}:=(T_{n-1})_{n-2,2n-3}|_{a_i=0,i\neq n-2, n-1}$ are 
(see Statement~\ref{B} with $k=n-2$)

$$\begin{array}{ccccccc}
\Omega _5&&(j,j)&,&\Omega _6a_{n-2}^{n-1}&&
(j,n-2+j)\\
&{\rm in~positions}&&&&{\rm in~positions}&\\ 
n\Omega _5&&(n-2+\nu ,\nu )&,&2\Omega _6a_{n-2}^{n-1}&&(n-2+\nu ,n-2+\nu )
\end{array}$$
($1\leq j\leq n-2$, $1\leq \nu \leq n-1$). Hence 
$\det \tilde{T}=\Omega _{\lambda}a_{n-2}^{(n-1)^2}\not\equiv 0$. 
Thus $t_{n-2,2n-3}\det ((T_{n-1})_{n-2,2n-3})$ 
(and hence $\det T_{n-1}$ as well) is divisible by $a_n$, 
but not by $a_n^2$. This proves 
part~(4).
\end{proof}

\section{The projections of the set $\Sigma$}

The set $\Sigma$ (see Notation~\ref{notanota}) is a codimension $2$ algebraic 
subset of $\mathbb{C}^n$. It is locally defined 
by the equations of any two of its projections $\Sigma _k$, 
or by Res$(P,P',x)=$Res$(P',P'',x)=0$. 
Consider for $1\leq k\leq n-1$ the polynomial $P_k:=P-xP'/(n-k)$; 
its coefficient of $x^{n-k}$ equals $0$. 
When $P$ has a root $\alpha \neq 0$ of 
multiplicity $m\geq 1$, then $\alpha$ is a root of $P_k$ of multiplicity 
$m-1$. We set $P_n:=P'$.  

\begin{tm}\label{prop2}
For $k\neq n-1$ the polynomial $V_k:=${\rm Res}$(P_k,P_k',x)$ is irreducible. 
The polynomial {\rm Res}$(P_{n-1},P_{n-1}',x)$ is the product of $a_n$ 
and of an irreducible polynomial in~$a^{n-1}$; we set 
$V_{n-1}:=${\rm Res}$(P_{n-1},P_{n-1}',x)/a_n$.
\end{tm}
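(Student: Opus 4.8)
The plan is to reduce the whole statement to irreducibility properties of discriminants of "generic restricted polynomials", and then to prove those by the quasi‑homogeneity‑plus‑two‑monomials method already used in Lemma~\ref{lm1}, the only genuinely delicate point being the exceptional case $k=n-1$. Throughout I assume $n\ge 3$ (the case $n=2$ is trivial). The basic observation is this: for $1\le k\le n-1$ the coefficient of $x^{n-j}$ in $P_k=P-xP'/(n-k)$ equals $\frac{j-k}{n-k}a_j$ for $0\le j\le n-1$ (with $a_0=1$) and equals $a_n$ for $j=n$; hence every coefficient of $P_k$ except the leading one (the nonzero constant $-k/(n-k)$) and the coefficient of $x^{n-k}$ (which is $0$) is a nonzero scalar multiple of one of $a_1,\dots,\widehat{a_k},\dots,a_n$. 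Consequently, after dividing $P_k$ by its leading coefficient and performing the obvious invertible linear change $a_j\mapsto(\mathrm{const})\,a_j$ of the parameters, $P_k$ becomes the generic monic polynomial $\widehat P_k$ of degree $n$ with vanishing coefficient of $x^{n-k}$, whose free parameters are $a^k\in\mathbb C^{n-1}$; since the resultant only changes by a power of the leading coefficient under such scaling, $V_k=\mathrm{Res}(P_k,P_k',x)$ equals, up to a nonzero constant factor and an invertible linear change of $a^k$, the discriminant $\mathrm{disc}(\widehat P_k)$. Likewise $V_n=\mathrm{Res}(P',P'',x)$ is, up to such changes, the discriminant of the generic monic degree‑$(n-1)$ polynomial, which is irreducible by Lemma~\ref{lm1} applied with $n-1$ in place of $n$; so the case $k=n$ is settled.

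For $1\le k\le n-2$ I would argue as follows. Assigning weight $j$ to $a_j$, the polynomial $\widehat P_k$ is quasi‑homogeneous of degree $n$ in $x,a$ jointly, so $\mathrm{disc}(\widehat P_k)=\pm\prod_{i<j}(x_i-x_j)^2$ is quasi‑homogeneous of degree $n(n-1)$ in $a^k$ (cf.\ Remark~\ref{rem1}). Setting all parameters but $a_n$ equal to $0$ leaves the binomial $x^n+\gamma_n a_n$ with $\gamma_n\neq 0$ (here $n-k\ge 2$, so the forced zero coefficient is neither the leading one nor the constant term), whose discriminant is $\Omega_1 a_n^{\,n-1}\neq 0$; setting all parameters but $a_{n-1}$ equal to $0$ (legitimate because $k\neq n-1$) leaves $x\,(x^{\,n-1}+\gamma_{n-1}a_{n-1})$, whose discriminant is $\Omega_2 a_{n-1}^{\,n}\neq 0$. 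Thus $V_k$ is a quasi‑homogeneous polynomial of degree $n(n-1)$ containing both monomials $a_n^{\,n-1}$ and $a_{n-1}^{\,n}$ with nonzero coefficients, and the argument from the proof of Lemma~\ref{lm1} applies verbatim: restricting a hypothetical nonconstant factor to the $a_n$‑axis forces its weighted degree to lie in $n\mathbb Z$, restricting it to the $a_{n-1}$‑axis forces it into $(n-1)\mathbb Z$, hence into $n(n-1)\mathbb Z$, and since the two factors' degrees add up to $n(n-1)$ one of them must vanish. Hence $V_k$ is irreducible for $1\le k\le n-2$.

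The remaining case $k=n-1$ is the main obstacle. Here $P_{n-1}=P-xP'$ has vanishing coefficient of $x$ and constant term $a_n$, so $P_{n-1}'=xG$ with $\deg G=n-2$; writing $x^2H:=P_{n-1}|_{a_n=0}$ (divisible by $x^2$ and generically not by $x^3$) one gets $G=2H+xH'$. Multiplicativity of the resultant gives $\mathrm{Res}(P_{n-1},P_{n-1}',x)=\pm a_n\,\mathrm{Res}(P_{n-1},G,x)$, and at $a_n=0$ one computes $\mathrm{Res}(P_{n-1},G,x)|_{a_n=0}=\pm G(0)^2\,\mathrm{Res}(H,G)$, which is $\not\equiv 0$: indeed $H$ is, up to scaling and an invertible linear change, the generic monic polynomial of degree $n-2$, hence squarefree for generic parameters, so by $G=2H+xH'$ a common root of $H$ and $G$ would be a repeated root of $H$, impossible generically. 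Therefore $a_n$ divides $\mathrm{Res}(P_{n-1},P_{n-1}',x)$ exactly once and $V_{n-1}:=\mathrm{Res}(P_{n-1},P_{n-1}',x)/a_n$ is a polynomial in $a^{n-1}$ not divisible by $a_n$.

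Finally I would prove $V_{n-1}$ irreducible. The coordinate‑axis trick fails here because $\widehat P_{n-1}$ has vanishing coefficient of $x$, so every one‑variable restriction has a double root at $0$ and its discriminant vanishes. Instead I relate $\widehat P_{n-1}$ to the case $k=1$ by reciprocation: the reciprocal $y^n\widehat P_{n-1}(1/y)$, renormalized to be monic, is a polynomial of the $\widehat P_1$‑form (vanishing coefficient of $y^{n-1}$) whose coefficients are invertible monomial‑type rational functions of $a_1,\dots,a_{n-2},a_n$ with denominators powers of $a_n$; since the roots get inverted and their product is $\pm\gamma_n a_n$, one obtains $\mathrm{disc}(\widehat P_{n-1})=(\gamma_n a_n)^{2(n-1)}\cdot\bigl(\mathrm{disc}(\widehat P_1)\text{ evaluated at those rational functions}\bigr)$. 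This substitution induces a ring isomorphism between the localizations $\mathbb C[a_1,\dots,a_{n-2},a_n^{\pm1}]$ and $\mathbb C[b_2,\dots,b_{n-1},b_n^{\pm1}]$ (where $b_n$ is the constant‑term variable of $\widehat P_1$); since $\mathrm{disc}(\widehat P_1)$ is irreducible by the previous step and is not associate to a power of $b_n$ (it contains a pure power of the coefficient of $y$), it stays prime after localizing, whence $\mathrm{disc}(\widehat P_{n-1})$, and therefore $V_{n-1}$, is prime in $\mathbb C[a_1,\dots,a_{n-2},a_n^{\pm1}]$. Combined with $a_n\nmid V_{n-1}$ this forces irreducibility of $V_{n-1}$ in $\mathbb C[a^{n-1}]$: a factorization into nonconstant polynomials would make one factor a unit of the localized ring, i.e.\ a constant times a power $a_n^e$; $e>0$ would contradict $a_n\nmid V_{n-1}$, $e<0$ would contradict polynomiality, and $e=0$ would contradict nonconstancy. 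I expect the bookkeeping of the unit factor $(\gamma_n a_n)^{2(n-1)}$ produced by inverting the roots, together with making the localization step airtight, to be the only real subtlety.
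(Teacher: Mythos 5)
Your proof is correct, and for the exceptional case $k=n-1$ it takes a genuinely different route from the paper's. For $k=n$ and for $1\le k\le n-2$ your argument is essentially the one in the text: reduce to the discriminant of a generic monic polynomial with one forced zero coefficient, then use quasi-homogeneity together with the presence of the two pure-power monomials $A_ka_n^{n-1}$ and $B_ka_{n-1}^n$ and the coprimality of the weights $n$ and $n-1$, exactly as in Lemma~\ref{lm1}. Where you diverge is $k=n-1$. The paper first extracts the factor $a_n$ from the Sylvester matrix (Property~A), then analyzes which pure-power monomials $C_ja_j^na_n^{n-j-2}$ occur in $R_{n-1}/a_n$; since the relevant weights $n$ and $n-2$ are coprime only for odd $n$, this forces a parity split, a separate argument via the monomial $C_{n-3}a_{n-3}^na_n$ for even $n>6$, and an appeal to computer verification for $n=4$ and $n=6$. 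You instead factor out $a_n$ via $P_{n-1}'=xG$ and multiplicativity of the resultant (checking $a_n\nmid V_{n-1}$ by restricting to $a_n=0$, where $P_{n-1}=x^2H$ and $G=2H+xH'$), and then transfer irreducibility from the already-settled case $k=1$ by passing to the reciprocal polynomial: the substitution $b_j=a_{n-j}/a_n$, $b_n=1/a_n$ is an isomorphism of the localizations $\mathbb{C}[a_1,\ldots,a_{n-2},a_n^{\pm1}]\cong\mathbb{C}[b_2,\ldots,b_{n-1},b_n^{\pm1}]$ carrying $\mathrm{disc}(\widehat P_1)$ to $a_n^{-2(n-1)}\mathrm{disc}(\widehat P_{n-1})$ up to a unit, so $V_{n-1}$ is prime in the localized ring, and the final unit analysis ($F=ca_n^e$ with $e>0$ contradicting $a_n\nmid V_{n-1}$, $e=0$ contradicting nonconstancy) closes the argument. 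This buys uniformity in $n$ and eliminates both the parity case distinction and the computer check, at the modest cost of invoking localization and the discriminant-of-the-reciprocal identity $\prod_{i<j}(1/x_i-1/x_j)^2=(\prod_ix_i)^{-2(n-1)}\prod_{i<j}(x_i-x_j)^2$; the only bookkeeping you should make explicit is that the case $k=1$ you rely on is indeed covered by the $1\le k\le n-2$ argument (true for $n\ge3$) and that rescaling coefficients and dividing by leading coefficients only changes resultants by nonzero constants and monomial units, which does not affect primality in the localization.
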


\begin{tm}\label{propsets}
(1) For $k=1,\ldots ,n$  
the set $\Sigma _k$ is defined by the condition 
$V_k=0$. 

(2) The sets $\Sigma _k$ are irreducible.
\end{tm}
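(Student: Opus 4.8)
The plan is to prove part (1), i.e. that $\Sigma_k$ — understood as the Zariski closure of the projection of $\Sigma$ — coincides with $\{V_k=0\}$; part (2) is then immediate, since by Theorem~\ref{prop2} the polynomial $V_k$ is irreducible, so $\{V_k=0\}$ is an irreducible hypersurface. Everything rests on one elementary observation. Write $P=\tilde{P}+a_kx^{n-k}$, where $\tilde{P}$ (obtained from $P$ by deleting the term $a_kx^{n-k}$) depends only on $a^k$; then $P_k=\tilde{P}-x\tilde{P}'/(n-k)$ also depends only on $a^k$. If $\alpha\neq 0$ satisfies $P_k(\alpha)=P_k'(\alpha)=0$ and one puts $a_k:=-\tilde{P}'(\alpha)/\bigl((n-k)\alpha^{n-k-1}\bigr)$, then $P'(\alpha)=0$ by the very choice of $a_k$, after which $P_k(\alpha)=0$ gives $P(\alpha)=0$ and $P_k'(\alpha)=0$ gives $P''(\alpha)=0$; hence $\alpha$ is a root of the corresponding polynomial $P$ of multiplicity $\geq 3$. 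Conversely, as noted just before Theorem~\ref{prop2}, if $P$ has a root $\alpha\neq 0$ of multiplicity $\geq 3$ then $\alpha$ is a root of $P_k$ of multiplicity $\geq 2$. For $k=n$ the same mechanism applies with $P_n=P'=\tilde{P}'$, which is independent of $a_n$: from $P'(\beta)=P''(\beta)=0$ the choice $a_n:=-\tilde{P}(\beta)$ makes $\beta$ a root of $P$ of multiplicity $\geq 3$.

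\textbf{The inclusion $\Sigma_k\subseteq\{V_k=0\}$.} I would restrict to the dense open subset $\Sigma^{\circ}\subset\Sigma$ on which $P$ has exactly one root of multiplicity $3$, this root is nonzero, the remaining $n-3$ roots are simple and $a_n\neq 0$ ($\Sigma^{\circ}$ is dense because $\Sigma$ is the closure of the irreducible locus of polynomials $(x-\alpha)^3g$ with $g$ monic of degree $n-3$ and all $n$ roots distinct). For $A\in\Sigma^{\circ}$ the polynomial $P_k$ has the triple root of $P$ as a root of multiplicity $\geq 2$, so $\mathrm{Res}(P_k,P_k',x)=0$; for $k\neq n-1$ this says $V_k=0$, and for $k=n-1$ one combines it with $a_n\neq 0$ and the factorisation $\mathrm{Res}(P_{n-1},P_{n-1}',x)=a_nV_{n-1}$ of Theorem~\ref{prop2} to get $V_{n-1}=0$. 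Thus $\pi_k(\Sigma^{\circ})\subseteq\{V_k=0\}$, and taking Zariski closures (and using that $\Sigma^{\circ}$ is dense in $\Sigma$) yields $\Sigma_k\subseteq\{V_k=0\}$.

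\textbf{The reverse inclusion.} Here I would use that $\{V_k=0\}$ is irreducible (Theorem~\ref{prop2}) and is not contained in $\{a_n=0\}$ for $k\leq n-1$ (resp. in $\{a_{n-1}=0\}$ for $k=n$): indeed $a_n$ is the constant term of $P_k$, and for $k\leq n-2$ a polynomial $P_k$ with $P_k(0)=0$ still has, generically, only simple roots, so $a_n\nmid V_k$; for $k=n-1$ both the $x^0$- and $x^1$-coefficients of $P_{n-1}$ vanish when $a_n=0$, so $0$ becomes a multiple root of $P_{n-1}$ — but only a double one for generic remaining coefficients, whence $a_n^2\nmid\mathrm{Res}(P_{n-1},P_{n-1}',x)$, i.e. $a_n\nmid V_{n-1}$; the case $k=n$ is analogous. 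Hence on the dense open subset of $\{V_k=0\}$ where $a_n\neq 0$ (resp. $a_{n-1}\neq 0$) the vanishing of $\mathrm{Res}(P_k,P_k',x)$ forces $P_k$ to have a nonzero multiple root $\alpha$, and then the construction of the first paragraph produces a value of $a_k$ with $(a^k,a_k)\in\Sigma$, so that $a^k\in\pi_k(\Sigma)\subseteq\Sigma_k$. Thus a dense subset of $\{V_k=0\}$ lies in the closed set $\Sigma_k$, whence $\{V_k=0\}\subseteq\Sigma_k$. Combined with the previous inclusion this proves (1), and (2) follows.

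\textbf{Main obstacle.} The step I expect to be the main obstacle is the bookkeeping around $k=n-1$, where $\mathrm{Res}(P_{n-1},P_{n-1}',x)$ carries the extra factor $a_n$: one must consistently pass to $V_{n-1}=\mathrm{Res}(P_{n-1},P_{n-1}',x)/a_n$ and check, by genericity or by a short perturbation inside $\Sigma$, that $V_{n-1}$ still vanishes on all the relevant loci, in particular at points of $\Sigma$ with $a_n=0$. The other ingredients — the substitution giving $P(\alpha)=P'(\alpha)=P''(\alpha)=0$, the non-divisibility statements $a_n\nmid V_k$, and the density/closure steps — are short, using only elementary elimination theory and the irreducibility statements of Theorem~\ref{prop2}.
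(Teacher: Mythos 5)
Your proof is correct, but your handling of the reverse inclusion $\{V_k=0\}\subseteq\Sigma_k$ is genuinely different from the paper's. The paper gets this inclusion by a dimension count: it shows $\Sigma_k\subset Z_k:=\{V_k=0\}$ and then invokes the irreducibility of $Z_k$ (Theorem~\ref{prop2}) together with the fact that both sets have codimension $1$ to conclude $\Sigma_k=Z_k$; this silently uses that the projection $\Sigma\to\Sigma_k$ is generically finite, and for $k=n-1$ it adds a separate case analysis of where points of $\Sigma$ with triple root $0$ versus $\beta\neq 0$ land relative to $\{a_n=0\}$. You instead make the inclusion constructive: from the identity $P_k=\tilde{P}-x\tilde{P}'/(n-k)$ (so $P_k$ depends only on $a^k$) you solve explicitly for the missing coefficient $a_k$ at any nonzero multiple root $\alpha$ of $P_k$, producing a point of $\Sigma$ over a dense subset of $\{V_k=0\}$, and then close up using irreducibility. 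This buys a self-contained argument that does not rest on the unstated generic finiteness of the projection, and it treats $k=n-1$ uniformly (via $a_n\neq 0$ and the factorisation $\mathrm{Res}(P_{n-1},P'_{n-1},x)=a_nV_{n-1}$) rather than by the paper's ad hoc case split; the price is a little more bookkeeping with dense open subsets and Zariski closures. One caveat you rightly flag: the statement is literally true only if $\Sigma_k$ is read as the Zariski closure of the projection (e.g. for $k\leq n-2$, points with $a_{n-1}=a_n=0$ and $a_{n-2}\neq 0$ lie on $\{V_k=0\}$ but not in the set-theoretic image of $\Sigma$); the paper's proof tacitly makes the same identification, so this is not a defect of your argument.
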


\begin{proof}[Proof of Theorem~\ref{prop2}]
Irreducibility of $V_n$ follows from an analog of Lemma~\ref{lm1} formulated 
for $P'$ instead of $P$. 
For $1\leq k\leq n-2$ 
the polynomial $V_k$  
is a quasi-homogeneous polynomial in $a^k$ containing monomials 
$A_ka_n^{n-1}$ and $B_ka_{n-1}^n$ (resp. $A_na_{n-1}^{n-2}$ and $B_na_{n-2}^{n-1}$), 
where $A_k\neq 0$ and $B_k\neq 0$ are constants, see Lemma~\ref{lm1} 
and its proof. 
Such a polynomial cannot be represented as a product of two quasi-homogeneous 
polynomials of smaller quasi-homogeneous degrees because the quasi-homogeneous 
weights of $a_n$, $a_{n-1}$ (resp. of $a_{n-1}$ and $a_{n-2}$) equal $n$ and 
$n-1$ (resp. $n-1$ and $n-2$). (If such a representation exists, and if 
$h_1$ and $h_2$ are the quasi-homogeneous degrees of the two factors, then 
the monomial $A_ka_n^{n-1}$ is a product of monomials 
$H^1a_n^{k_1}$ and $H^2a_n^{k_2}$ of each factor and $k_1n=h_1$, $k_2n=h_2$, 
$k_1<n-1$, $k_2<n-1$. 
In the same way the monomial $B_ka_{n-1}^n$ 
implies the existence of $l_1, l_2\in \mathbb{N}$ such that $l_1(n-1)=h_1=k_1n$, 
$l_2(n-1)=h_2=k_2n$, $l_1<n$, $l_2<n$ which is impossible.)    

The polynomial $R_{n-1}:=$Res$(P_{n-1},P_{n-1}',x)$ 
is reducible. This follows from 
\vspace{1mm}

{\bf Property~A}. {\em The nonzero entries in the last two columns of 
the matrix $S:=S(P_{n-1},P_{n-1}')$ are $s_{n-2,2n-2}=s_{n-1,2n-1}=a_n$ and 
$s_{2n-1,2n-2}=-2a_{n-2}$.}
\vspace{1mm}

Hence every monomial of $R_{n-1}$ is divisible by $a_n$, and 
$R_{n-1}/a_n$ (of quasi-homogeneous degree $n(n-2)$) 
contains monomials of the form $C_ja_j^na_n^{n-j-2}$, $C_j\neq 0$, 
$j=1,2,\ldots$, $n-2$, $n$  
(this is proved by complete analogy with Lemma~\ref{lm1}). If $n$ is odd, then 
$(n,n-2)=1$ and applying to the monomials $C_{n-2}a_{n-2}^n$ and 
$C_na_n^{n-2}$ a reasoning similar to the one above (which was 
applied to $A_ka_n^{n-1}$ and 
$B_ka_{n-1}^n$) one concludes that $R_{n-1}/a_n$ is irreducible. 

If $n$ is even, then $R_{n-1}/a_n$ could be reducible only if it is 
the product of two polynomials of quasi-homogeneous degree 
$n(n-2)/2$. To show that this is impossible consider the monomial 
$C_{n-3}a_{n-3}^na_n$ (for which $C_{n-3}\neq 0$, see Lemma~\ref{lm1}). 
It must be the product of monomials $C_{n-3}'a_{n-3}^s$ and $C_{n-3}''a_{n-3}^qa_n$. 
Hence $s(n-3)=n(n-2)/2=q(n-3)+n$, i.e. $s=n(n-2)/2(n-3)$ and 
$q=n(n-4)/2(n-3)$. The numbers $n-2$ and $n-3$ are coprime, and such are 
$n-4$ and $n-3$ 
as well. For $n>6$ the ratio $n/(n-3)$ is not integer, so such a 
product of polynomials (equal to $R_{n-1}/a_n$) does not exist. In the 
particular cases $n=4$ and $n=6$ one can check with the help of a computer 
that the corresponding polynomials $P_3/a_4$ and $P_5/a_6$ are irreducible.
\end{proof}

\begin{proof}[Proof of Theorem~\ref{propsets}]
For $k\neq n-1$ the polynomial $V_k$ defines the subset 
$Z_k\subset \mathbb{C}_k^{n-1}$ on which the polynomial $P_k$ 
has a multiple root. Hence 
$\Sigma _k\subset Z_k$. On the other hand the set $Z_k$ is irreducible 
(see Theorem~\ref{prop2}) and of 
codimension $1$ in the space $\mathbb{C}_k^{n-1}$, therefore $\Sigma _k=Z_k$.  
For $k\neq n-1$ part (2) follows from the irreducibility 
of the polynomials $V_k$, see Theorem~\ref{prop2}. 

Let $k=n-1$. The polynomial $V_{n-1}$ is of the form $a_{n-2}W^*+a_nW^{**}$, 
$W^*,W^{**}\in \mathbb{C}[a^{n-1}]$, see Property~A. If $0$ is a triple root 
of $P$, then $a_{n-2}=a_{n-1}=a_n=0$ and the projection of this point of $\Sigma$ 
in the space $\mathbb{C}_{n-1}^{n-1}$ belongs to the set $\{ V_{n-1}=0\}$. 
If $\beta \neq 0$ is a triple root of $P$, then 
$P=(x-\beta )^3(x^{n-3}+g_1x^{n-4}+\cdots +g_{n-3})$, where the coefficients 
$g_j$ run over a whole neighbourhood in $\mathbb{C}^{n-3}$. Such points of 
$\Sigma$ project in $\mathbb{C}_{n-1}^{n-1}$ in the set $\{ V_{n-1}=0\}$, but not 
in $\{ a_n=0\} \backslash \{ V_{n-1}=0\}$. This proves the theorem for 
$k=n-1$.   
\end{proof}


\begin{thebibliography}{40}
\bibitem{AF} A. Albouy and Y. Fu, Some Remarks About
Descartes’ Rule of Signs, Elemente der Mathematik 69 (2014),
186–194.
\bibitem{FKS} J. Forsg{\aa}rd, V.P. Kostov and B.Z. Shapiro, 
Could Ren\'e Descartes 
have known this?, Experimental Mathematics vol. 24, issue 4 (2015) 438-448.
\bibitem{Ko} V.P. Kostov, Topics on hyperbolic polynomials in one variable. 
Panoramas et Synth\`eses 33 (2011), vi + 141 p. SMF.
\bibitem{Me} I. M\'eguerditchian, G\'eom\'etrie du 
Discriminant R\'eel et des Polyn\^omes Hyperboliques, Th\`ese de Doctorat 
(soutenue le 24 janvier 1991 \`a Rennes). 
\end{thebibliography}
\end{document}